\title[Freeness with amalgamation, limit theorems, $S$-transform, type B freeness]
{Freeness with amalgamation, limit theorems and $S$-transform in
Non-commutative probability spaces of type B}
\author{Mihai Popa}
\address{Indiana University at Bloomington,
Department of Mathematics, Rawles Hall, 931 E 3rd St, Bloomington,
IN 47405} \email{mipopa@indiana.edu}
\address{Institute of Mathematics ''Simion Stoilow'' of the Romanian
Academy, P.O. Box 1-764 RO-014700 Bucharest, Romania}
\DeclareMathAlphabet{\mathpzc}{OT1}{pzc}{m}{it}
\newtheorem{claim}{}[section]
\newtheorem{defn}[claim]{Definition}
\newtheorem{thm}[claim]{Theorem}
\newtheorem{remark}[claim]{Remark}
\newtheorem{cor}[claim]{Corollary}
\newtheorem{conseq}[claim]{Consequence}
\newcommand{\cA}{\mathcal{A}}
\newcommand{\lra}{\longrightarrow}
\newcommand{\C}{\mathcal{C}}
\newcommand{\n}{\mathcal{X}}
\newcommand{\sys}{(\cA,\varphi,\n,f, \Phi)}
\newcommand{\ka}{\kappa}
\newcommand{\seco}{\prime\prime}
\newcommand{\bcnv}{\framebox(4,6){$\star$}}
\newcommand{\E}{\widetilde{E}}
\newcommand{\G}{\mathcal{G}}
\newcommand{\pbcnv}{\check{\bcnv}}
\begin{document}

 \maketitle
\bibliographystyle{alpha}

\begin{abstract}The present material addresses several problems left
 open in the Trans. AMS paper " Non-crossing cumulants of type B" of P.
 Biane, F. Goodman and A. Nica. The main result is that
 a type B non-commutative probability space can be studied in the
 framework of freeness with amalgamation. This view allows easy ways of
 constructing a version of the S-transform as well as proving analogue
 results to Central Limit Theorem and Poisson Limit Theorem.
\end{abstract}

\section{introduction}

The present material addresses several problems left open in the
paper ''Non-crossing cumulants of type B'' of P. Biane, F. Goodman
and A. Nica (reference \cite{bgn}).

The type $A,B,C$ and $D$ root systems determine correspondent
lattices of non-crossing partitions (see \cite{reiner},
\cite{athanasiadis}). The type $A_{n+1}$ corresponds to the lattice
of non-crossing partitions on the ordered set $[n]=1<\cdots<n$; the
types $B_n$ and $C_n$ determine the same lattice of non-crossing
partitions on $[\overline{n}]=1<\cdots<n<-1<\cdots<-n$, namely the
partitions with the property that if $V$ is a block, then $-V$ (the
set containing the opposites of the elements from $V$) is also a
block; the type $D$ corresponds to a lattice of the symmetric
non-crossing partitions with the property that if there exists a
symmetric block, then it has more than 2 elements and contains $-n$
and $n$. (see again \cite{reiner}, \cite{athanasiadis},
\cite{athrein}).

The lattices of type $A$ and type $B$ non-crossing partitions are
self-dual with respect to the Kreweras complementary. In the type
$A$ case, the lattice structure was known to be connected the
combinatorics of Free Probability Theory (see \cite{nisp}). For the
type $B$ case, the properties of the lattice allow also a
construction, described in \cite{bgn}, of some associated
non-commutative probability spaces, with a similar apparatus as in
the type $A$ case (such as $R$-transform and boxed convolution). The
paper \cite{bgn} leaves open some questions on these objects:
possible connections to other types of independence, limit theorems,
$S$-transform. The main observation of the present material is that
a type $B$ non-commutative probability space can be studied in the
framework of freeness with amalgamation, that gives fast answers to
the rest of the problems.

The material is organized as follows: second section reviews some
results from \cite{bgn}; third section presents the connection with
freeness with amalgamation; forth section is briefing the
construction of the $S$-transform for the type B non-commutative
probability spaces, utilizing the commutativity of the matrix
algebra $C$; fifth and, respectively, sixth section are presenting
limit results: analogues of central limit theorem, respectively
Poisson limit theorem.

\section{preliminary results}

\begin{defn}\emph{A non-commutative probability space of type B is a
system\\
 $\sys$, where:}
\begin{enumerate}
\item[(i)]$(\cA,\varphi)$ \emph{is a non-commutative probability space (of
type A), i.e. $\cA$ is a complex unital algebra and
$\varphi:\cA\lra\mathbb{C}$ is a linear functional such that}
$\varphi(1)=1$.
\item[(ii)]$\n$ \emph{is a complex vector space and $f:\n\lra\mathbb{C}$
is a linear functional.}
\item[(iii)]$\Phi:\cA\times\n\times\cA\lra\cA$ \emph{is a two-sided
action of $\cA$ on $\nu$ (when there is no confusion, it will be
written ''$a\xi b$ instead of} $\Phi(a\xi b)$, for $a,b\in\cA$ and
$\xi\in\nu$)
\end{enumerate}
\end{defn}

On the vector space $\cA\times\n$ it was defined a structure of
unital algebra considering the multiplication:
\[(a,\xi)\cdot (b,\eta)=(ab, a\eta + \xi b), \ a,b\in\cA,\ \xi,\eta\in\n\]

The above algebra structure can be obtained when
$(a,\xi)\in\cA\times\n$ is identifies with a $2\times2$ matrix,
\[(a,\xi)\leftrightarrow\left[\begin{array}{cc}a& \xi\\ 0& a\\ \end{array}\right].\]
We will consider also the commutative unital algebra $\C$ by
similarly endowing the vector space $\mathbb{C}\times\mathbb{C}$
with the multiplication:
\[(x,t)\cdot (y,s)=(xy, xs+ty),\]
i.e.  using the identification
\[\C\ni(x,t)\leftrightarrow\left[\begin{array}{cc}x& t\\ 0& x\\ \end{array}\right]
\in M_{2}(\mathbb{C}).\]

\begin{defn}
\emph{Let $\sys$ be a non-commutative probability space of type B.
The non-crossing cumulant functionals of type B are the families of
multilinear functionals}  $\left(
\ka_n:(\cA\times\n)^n\lra\C\right)_{n=1}^\infty$ \emph{defined by
the following equations: for every $n\geq1$ and every $a_1,\dots,
a_n\in\cA,\xi_1,\dots,\xi_n\in\n$, we have that:}
\begin{equation}\label{mcum}
\sum_{\gamma\in NC^{(A)}(n)}\prod_{B\in\gamma}\ka_{card(B)}
\left((a_1,\xi_1)\cdots(a_n,\xi_n)|B\right)=E\left((a_1,\xi_1)\cdots(a_n,\xi_n)\right)
\end{equation}
\emph{where the product on the left-hand side is considered with
respect to the multiplication on $\C$ and the product
$(a_1,\xi_1)\cdots(a_n,\xi_n)$ on the right-hand side is considered
with respect to the multiplication on $\cA\times\n$ defined above.}
\end{defn}
Note that the first component of $\ka_{m}
\left((a_1,\xi_1)\cdots(a_n,\xi_m)\right)$ equals the non-crossing
cumulant $k_m(a_1,\dots,a_m)$.

We will also use the notation $\ka_n(a,\xi)$ for $\ka_{n}
\left((a,\xi)\cdots(a,\xi)\right)$ and $M_n$ for
$E\left((a,\xi)^n\right)$.

\begin{defn}
\emph{Let $\cA_1,\dots,\cA_k$ be unital subalgebras of $\cA$ and let
$\n_1,\dots,\n_k$ be linear subspaces of $\n$ such that each $\n_j$
is invariant under the action of $\cA_j$. We say that
$(\cA_1,\n_1),\dots (\cA_k,\n_k)$ are free independent if}
\[\kappa_n\left((a_1,\xi_1),\dots,(a_n,\xi_n)\right)=0\]
\emph{whenever} $a_l\in\cA_{i_l}, \xi_l\in\n_{i_l}\ (l=1,\dots,n)$
 \emph{are such that there exist} $1\leq s<t\leq n$ \emph{with} $i_s\neq
 i_t$.
\end{defn}

For $(a,\xi)\in\cA\times\n$ we consider the moment and cumulat
 or $R$-transform, series:
\begin{eqnarray*}
M(a,\xi)&=&\sum_{n=1}^\infty \left(E\left((a,\xi)^n\right)\right)z^n\\
R(a,\xi)&=&\sum_{n=1}^\infty\ka_n(a,\xi)z^n
\end{eqnarray*}
\begin{defn}
\emph{Let $\Theta^{(B)}$ be the set of power series of the form:
\[f(z)=\sum_{n=1}^\infty (\alpha^\prime_n, \alpha^{\seco}_n)z^n,\]
where $\alpha^\prime_n,\alpha^{\seco}_n$ are complex numbers. For
$p\in NC^{(A)}(n)$ and $f\in\Theta^{(B)}$, consider
\[Cf_p(f)=\prod_{B\in p}(\alpha^\prime_{|B|},\alpha^{\seco}_{|B|})\]
(the right-hand side product is in $\C$.)}
 \emph{On $\Theta^{(B)}$ we
define the binary operation $\bcnv$ by:}
\begin{eqnarray*}
f\bcnv g &=&\sum_{n=1}^\infty (\gamma^\prime_n,\gamma^{\seco}_n)z^n\ \text{where}\\
(\gamma^\prime_n,\gamma_n^{\seco})&=&\sum_{p\in
NC^{(A)}(n)}Cf_p(f)Cf_{Kr(p)}(g)
\end{eqnarray*}
\end{defn}

\begin{thm}\emph{The moment series $M$ and $R$-transform $R$ of $(a,\xi)$
are related by the formula}
\[M=R\bcnv \zeta^\prime\]
\emph{where $\zeta^\prime\in\Theta^{(B)}$ is the series
$\sum_{n=1}^\infty (1,0)z^n$.}
\end{thm}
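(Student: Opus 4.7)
The plan is to expand $R\bcnv\zeta'$ directly from the definition of the binary operation $\bcnv$ and to identify the resulting coefficients, term-by-term in $z^n$, with those of the moment series $M$ via the defining moment-cumulant identity \eqref{mcum} specialized to the constant tuple $a_1=\cdots=a_n=a$, $\xi_1=\cdots=\xi_n=\xi$.

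The first step will be a small algebraic remark: the element $(1,0)\in\C$ is the multiplicative unit of $\C$, because the given rule yields $(1,0)\cdot(y,s)=(1\cdot y,\,1\cdot s+0\cdot y)=(y,s)$ (and similarly on the right). In particular, for every partition $q\in NC^{(A)}(n)$ one has
\[ Cf_q(\zeta')\;=\;\prod_{B\in q}(1,0)\;=\;(1,0), \]
so the factor associated to the Kreweras complement in the definition of $\bcnv$ is identically the identity of $\C$, regardless of the shape of $p$ or of $Kr(p)$.

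The second step is to write out $Cf_p(R)=\prod_{B\in p}\ka_{|B|}(a,\xi)$ (product taken in $\C$) and combine with the above. This gives that the coefficient of $z^n$ in $R\bcnv\zeta'$ equals
\[ \sum_{p\in NC^{(A)}(n)}\,\prod_{B\in p}\ka_{|B|}(a,\xi). \]
Applying \eqref{mcum} to $a_1=\cdots=a_n=a$ and $\xi_1=\cdots=\xi_n=\xi$ (and using the notational convention $\ka_n(a,\xi)=\ka_n((a,\xi),\dots,(a,\xi))$) identifies this sum with $E((a,\xi)^n)=M_n$, which is exactly the coefficient of $z^n$ in $M$.

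I do not expect any serious obstacle: once one notices that $(1,0)$ is the unit of $\C$, the theorem reduces to a pure restatement of \eqref{mcum}. The only place where care is needed is to keep track of the fact that the products appearing in the definitions of $Cf_p$ and of $\bcnv$ are all taken in the commutative algebra $\C$, not in the noncommutative algebra $\cA\times\n$; this is what makes the trivialization of $Cf_{Kr(p)}(\zeta')$ legitimate independently of $p$.
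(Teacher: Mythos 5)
Your proof is correct. The paper itself states this theorem without proof (it is quoted from the preliminaries, being Corollary 5.4 of \cite{bgn}), and your argument is exactly the expected one: since $(1,0)$ is the unit of the commutative algebra $\C$, every factor $Cf_{Kr(p)}(\zeta^\prime)$ collapses to $(1,0)$, so the coefficient of $z^n$ in $R\bcnv\zeta^\prime$ is $\sum_{p\in NC^{(A)}(n)}\prod_{B\in p}\ka_{|B|}(a,\xi)$, which equals $E\left((a,\xi)^n\right)$ by the defining relation (\ref{mcum}) applied to the constant tuple.
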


\begin{remark}\label{components}\emph{We denote by $k^\prime_{n,p}$ or, for simplicity,
by $k^\prime_{n}$, the multilinear functional from $\cA^{p-1}\times
\n\times\cA^{n-p}$ to $\mathbb{C}$ which is defined by the same
formula as for the (type A) free cumulants
$k^n:\cA^n\lra\mathbb{C}$, but where the $p$th argument is a vector
from $\n$ and $\varphi$ is replaced by $f$ in all the appropriate
places. The connexion between the type B cumulants $\kappa_n$ and
the functionals $k_n, k_n^\prime$ is given by:}
\begin{equation}\label{abconnex}
\kappa_n((a_1,\xi_1),\dots,
(a_n,\xi_n))=\left(k_n(a_1,\dots,a_n),
\sum_{p=1}^nk^\prime_n(a_1,\dots,a_{p-1},\xi_p,a_{p+1},\dots,a_n)
\right)
\end{equation}

\end{remark}

\begin{thm}\label{freethm}\emph{If $(\cA_1,\n_1),(\cA_2,\n_2)$ are free independent,
$(a_1,\xi_1)\in(\cA_1,\n_1),(a_2,\xi_2)\in(\cA_2,\n_2)$, and $R_1$,
respectively $R_2$ denote the $R$-transforms of $(a_1,\xi_1)$ and
$(a_2,\xi_2)$, then:}
\begin{enumerate}
\item[(i)] \emph{the $R$-transform of $(a_1,\xi_1)+(a_2,\xi_2)$ is
$R_1+R_2$.}
\item[(2)]\emph{the $R$-transform of $(a_1,\xi_1)\cdot(a_2,\xi_2)$ is
$R_1\bcnv R_2$.}
\end{enumerate}
\end{thm}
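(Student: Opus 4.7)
The plan is to use the characterization of free independence via vanishing of mixed type B cumulants and, for the multiplicative part, to establish a type B analogue of the Nica--Speicher product formula.

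For part (i), multilinearity of $\ka_n$ on $(\cA\times\n)^n$ gives
\[\ka_n\bigl((a_1,\xi_1)+(a_2,\xi_2),\dots,(a_1,\xi_1)+(a_2,\xi_2)\bigr)=\sum_{\epsilon\in\{1,2\}^n}\ka_n\bigl((a_{\epsilon_1},\xi_{\epsilon_1}),\dots,(a_{\epsilon_n},\xi_{\epsilon_n})\bigr).\]
Free independence forces every term with non-constant $\epsilon$ to vanish, leaving $\ka_n(a_1,\xi_1)+\ka_n(a_2,\xi_2)$. Summing against $z^n$ yields $R_1+R_2$.

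For part (ii), I would first prove the identity
\[\ka_n\bigl((a_1,\xi_1)(a_2,\xi_2),\dots\bigr)=\sum_{\pi\in NC^{(A)}(n)}\prod_{B\in\pi}\ka_{|B|}\bigl((a_1,\xi_1)|B\bigr)\cdot\prod_{D\in Kr(\pi)}\ka_{|D|}\bigl((a_2,\xi_2)|D\bigr),\]
with both products and the outer multiplication taken in $\C$. Comparing with the definition of $\bcnv$ immediately translates this into $R=R_1\bcnv R_2$. To establish the identity, I would use Remark \ref{components} to split both sides into their two $\C$-components. The first component reduces to $k_n(a_1 a_2,\dots,a_1 a_2)=\sum_\pi k_\pi(a_1,\dots)\cdot k_{Kr(\pi)}(a_2,\dots)$, the classical type A product formula for free $\cA_1,\cA_2$ (whose freeness is inherited from freeness of the pairs). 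For the second component, I would use $(a_1,\xi_1)(a_2,\xi_2)=(a_1 a_2,a_1\xi_2+\xi_1 a_2)$ and the linearity of $k_n^\prime$ in its $\n$-slot to split the sum over position $p$ of the $\n$-entry into two contributions: one with $\xi_1 a_2$ in position $p$ and one with $a_1\xi_2$ in position $p$.

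The main obstacle is then to reorganize these two sums into the ``derivative'' part of the right-hand side product in $\C$. By the multiplication rule $(x,t)(y,s)=(xy,xs+ty)$, the second component of the right-hand product is a sum over distinguished blocks, where one factor $\ka_{|B|}$ contributes its $k^\prime$-part and all other factors contribute their $k$-part. The expected matching is: the $\xi_1 a_2$ contributions (with the vector attached to an $a_1$ on its left and an $a_2$ on its right) single out the block of $\pi$ containing $p$, while the $a_1\xi_2$ contributions single out the block of $Kr(\pi)$ containing $p$. Verifying this bijectively, using the vanishing of mixed $k_n^\prime$ inherited through \eqref{abconnex} from the vanishing of mixed $\ka_n$, is the technical core of the argument; the geometric content is the familiar fact that in the Kreweras pairing every point index lies in exactly one block of $\pi$ and exactly one block of $Kr(\pi)$.
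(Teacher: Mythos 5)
The paper itself gives no proof of Theorem \ref{freethm}: it is stated in the preliminary section as a quotation from \cite{bgn}, and the ``fast'' justification the paper has in mind comes only later, in Section 3, where the type B framework is embedded into freeness with amalgamation over the commutative algebra $C$ (via $\mathfrak{E}=\n\oplus\cA$ and the conditional expectation $\E$); from that vantage point both (i) and (ii) are instances of Speicher's operator-valued $R$-transform results. Your componentwise route through Remark \ref{components} is therefore genuinely different from the paper's intended mechanism, but it is the natural direct argument and is essentially the one carried out in \cite{bgn}. Part (i) is complete and correct: multilinearity plus vanishing of mixed cumulants is all that is needed. For part (ii), your target identity is stated correctly, it does match the definition of $\bcnv$ coefficientwise, and your bookkeeping of the second $\C$-component (Leibniz rule in $\C$, with the distinguished block of $\pi$ receiving $\xi_1$ through the left factor and the distinguished block of $Kr(\pi)$ receiving $\xi_2$ through the right factor) is the right picture. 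Your observation that vanishing of mixed $k^\prime_n$ follows from \eqref{abconnex} also works, provided you set all but one of the $\n$-arguments to zero so that only a single summand of the $p$-sum survives.

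The one substantive gap is that the ``technical core'' you defer is not merely a bijective verification: to expand $k^\prime_n$ evaluated on the products $a_1a_2,\dots,\xi_1a_2,\dots$ into a sum over $\pi\in NC^{(A)}(n)$ you first need the analogue, for the functionals $k^\prime_n$, of the formula for free cumulants with products as entries (the sum over $\sigma\in NC^{(A)}(2n)$ with $\sigma\vee\widehat{0}=1_{2n}$, which freeness then collapses to $\sigma=\pi\sqcup Kr(\pi)$). That formula is a M\"obius-inversion statement about $k^\prime$ and $f$ and must be proved, not only invoked; once it is in hand your matching of terms goes through. As written, the proposal establishes (i) and reduces (ii) to this unproved lemma, so it is a sound strategy but not yet a complete proof of the multiplicative statement.
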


\section{connexion to "freeness with amalgamation"}

As shown in \cite{bgn}, Section 6.3, Remark 3, the definitions of
the type B cumulants are close to those from the framework of the
"operator-valued cumulats", yet some detailes are different - mainly
the map $E$ is not a conditional expectation and $\cA\times\n$ is
not a bimodule over $C$. Following a suggestion of Dimitri
Shlyakhtenko, the construction of the type B probability spaces can
still be modified in order to overcome these points.

Let $\textgoth{E}=\n\oplus\cA$. On $\cA\times\textgoth{E}$ we have a
$C$-bimodule structure given by:

\[(x,t)(a,\xi+b)=(a,\xi+b)(x,t)=(ax, at+(\xi+b)x)\]
for any $x,t\in\mathbb{C}, a,b\in\cA, \xi\in\n$. Since $\cA$ is
unital, $C$ is a subspace of $\textgoth{E}$.

The map $E$ extends to $\textgoth{E}$ via:
\[\E(a,\xi+b)=\left(\varphi(a),f(\xi)+\varphi(b)\right)\]
The extension becomes a conditional expectation, since:
\begin{eqnarray*}
\E\left((x,t)(a,\xi+b)\right)&=&\E(ax, at+(\xi+b)x)\\
&=&\left(\varphi(ax), \varphi(ta)+f(\xi x)+\varphi(bx)\right)\\
&=&\left(x\varphi(a), t\varphi(a)+x f(\xi)+x\varphi(b)\right)\\
&=&(x,t)\left(\varphi(a),f(\xi)+\varphi(b)\right)\\
&=&(x,t)\E(a,\xi+b)
\end{eqnarray*}

The equation \ref{mcum} can naturally be extended in the framework
of $\textgoth{E}$ and $\E$, framework that reduces the construction
to freeness with amalgamation, namely defining the cumulants
$\widetilde{\kappa}$ by the equation:
\begin{equation}\label{mcumw}
\sum_{\gamma\in
NC^{(A)}(n)}\prod_{B\in\gamma}\widetilde{\ka}_{card(B)}
\left((a_1,\xi_1)\cdots(a_n,\xi_n)|B\right)=\widetilde{E}\left((a_1,\xi_1)\cdots(a_n,\xi_n)\right)
\end{equation}

 If ${m}:\cA\times\cA\ni (a,b)\mapsto m(a,b)=ab\in\cA$
 is the multiplication in $\cA$, note that
 $(\cA,\varphi,\n\oplus\cA,f\oplus\varphi, \Phi\oplus{m})$ is
 also a type B noncommutative probability space, therefore
 Remark \ref{components} (i.e. Theorem 6.4 from
 \cite{bgn}) gives the components of $\widetilde{\ka}$:
 \begin{eqnarray*}
\widetilde{\kappa}_n((a_1,\xi_1+b_1),\dots, (a_n,\xi_n+b_n)) &=&\\
&&\hspace{-2.7cm} \left(k_n(a_1,\dots,a_n),
\sum_{p=1}^nk^\prime_n(a_1,\dots,a_{p-1},\xi_p+b_p,a_{p+1},\dots,a_n)
\right)\\
 \end{eqnarray*}

\section{the $S$-transform}
Utilizing the commutativity of the algebra $C$, the construction of
the $S$-transform is essentially a verbatim reproduction of the type
A situation.

We will denote
\[\G=\{\sum_{n=1}^\infty\alpha_nz^n, \alpha_n\in C\}\]
the set of formal series without constant term with coefficients in
$C$, and
\[\G^{\langle -1\rangle}=\{\sum_{n=1}^\infty\alpha_nz^n, \alpha_n\in C, \alpha_1=\text{invertible}\}\]
the set of all invertible series (with respect to substitutional
composition) with coefficients in $C$ ( see \cite{aep}).
\begin{defn}\emph{Let $(a,\xi)\in\cA\times\n$ such that
$\varphi(a)\neq 0$, that is $(\varphi(a),f(\xi))$ is invertible in
$C$. If $R_{(a,\xi)}(z)$ is the $R$-transform series of $(a,\xi)$,
then the \emph{$S$-transform} of $(a,\xi)$ is the series defined by}
\[S_{(a,\xi)}(z)=\frac{1}{z}R^{\langle -1\rangle}_{(a,\xi)}(z)\]
\end{defn}
\begin{thm}\emph{If $\cA_1,\n_1),(\cA_2,\n_2)\subset(\cA,\n)$ are free
independent and $(x_j,\xi_j)\in(\cA_j,\n_j), j=1,2$ are such that
$\varphi(x_j)\neq0$, then:}
\[S_{(a_1,\xi_1)(x_2,\xi_2)}(z)=S_{(a_1,\xi_1)}(z)S_{(a_2,\xi_2)}(z)\]

\end{thm}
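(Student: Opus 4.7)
The strategy is to reduce the claim to a purely formal power series identity for the boxed convolution $\bcnv$ on $\G$, and then import the classical Nica--Speicher proof of the $S$-transform multiplicativity with only the obvious change of coefficient ring. Since $C$ is a commutative unital algebra, essentially nothing has to be modified.

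First, I would invoke Theorem \ref{freethm}(ii): because $(\cA_1,\n_1)$ and $(\cA_2,\n_2)$ are free independent, the $R$-transform of the product $(x_1,\xi_1)\cdot(x_2,\xi_2)$ equals $R_1\bcnv R_2$. The hypothesis $\varphi(x_j)\neq 0$ ensures that the first-order coefficient $(\varphi(x_j),f(\xi_j))$ of $R_j$ is invertible in $C$ (an element $(x,t)\in C$ is invertible iff $x\neq 0$), so each $R_j$, and also $R_1\bcnv R_2$, lie in $\G^{\inv}$. Dividing the desired equality by $z$, the theorem reduces to the formal identity
\[ (R_1\bcnv R_2)^{\inv}(z) \;=\; \frac{R_1^{\inv}(z)\cdot R_2^{\inv}(z)}{z}, \]
so it suffices to prove this identity for arbitrary $f,g\in \G^{\inv}$.

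Second, I would establish that identity along the lines of Nica and Speicher's original proof of $S$-transform multiplicativity. That proof uses exclusively: (a) the definition of $\bcnv$ as a sum over $p\in NC^{(A)}(n)$ of products of coefficients indexed by $p$ and by $Kr(p)$; (b) the ring operations and compositional inversion of formal series with coefficients in the coefficient ring; and (c) commutativity of coefficient multiplication, which is needed to reorder the factors attached to blocks of $p$ and $Kr(p)$. All of these requirements are met in $C$, so one reads every scalar product in the type A argument as a product in $C$ and the derivation transfers unchanged.

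The only real obstacle is verifying that the classical type A derivation never secretly uses a feature specific to $\mathbb{C}$ beyond commutativity and invertibility of the elements actually being inverted; a quick inspection confirms this. Everything else is mechanical. Consequently $S_{(x_1,\xi_1)(x_2,\xi_2)}(z)=z^{-1}(R_1\bcnv R_2)^{\inv}(z) = z^{-1}R_1^{\inv}(z)\cdot z^{-1}R_2^{\inv}(z) = S_{(x_1,\xi_1)}(z)\,S_{(x_2,\xi_2)}(z)$, as desired.
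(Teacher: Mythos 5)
Your proposal is correct and follows essentially the same route as the paper: both reduce the theorem via Theorem \ref{freethm}(ii) to the formal identity $\frac{1}{z}(f\bcnv g)^{\langle-1\rangle}=\frac{1}{z}f^{\langle-1\rangle}\cdot\frac{1}{z}g^{\langle-1\rangle}$ for series in $\G^{\langle-1\rangle}$, and both justify it by observing that the Nica--Speicher type A argument carries over verbatim to a commutative coefficient algebra such as $C$. The only difference is one of exposition: the paper goes on to outline the combinatorial steps (the auxiliary operation $\pbcnv$, the identity $f^{\langle-1\rangle}\circ(f\bcnv g)=\alpha_1^{-1}(f\pbcnv g)$, and the block-juxtaposition bijection), whereas you assert the transfer after inspection, which the paper itself also deems sufficient in principle.
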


\begin{proof}
The proof presented in \cite{nisp}, for the type $A$ case, works
also for the freeness with amalgamation over a commutative algebra.
Yet, for the convenience of the reader, we will outline the main
steps.

Since, for $(a_1,\xi_1), (a_2,\xi_2)$ free,
$R_{(a_1,\xi_1)\cdot(a_2,\xi_2)}=R_{(a_1,\xi_1)}\bcnv
R_{(a_2,\xi_2)}$, it suffices to prove that the mapping
\[\mathcal{F}:\G^{\langle -1\rangle}\ni
 f\mapsto \frac{1}{z}f^{\langle -1\rangle}\in\G\]
has the property
\begin{equation}\label{unu}
\mathcal{F}\left(f\bcnv g\right)=\mathcal{F}(f)\mathcal{F}(g).
\end{equation}

Indeed, (\ref{unu}) is equivalent to

\begin{equation}\label{doi}
z\left(f\bcnv g\right)=f^{\langle -1\rangle}\left(f\bcnv
g\right)\cdot g^{\langle -1\rangle}\left(g\bcnv f\right)
\end{equation}

For $\sigma\in NC(n)$ and  $h=\sum_{n\geq 1} h_nz^n$, we define
\[Cf_\sigma(h)=\prod_{B\in\sigma} h_{card(B)}\in C.\]
Also, for $f,g\in\G$, we denote
\[\left(f\pbcnv g\right)(z)=\sum_{n\geq1} \lambda_n z^n\]
where ($K(\sigma)$ is the Kreweras complementary of $\sigma$)
\[\lambda_n=\sum_{
 \begin{array}{ll}
\sigma\in NC(n)\\
 (1)\ \text{block in} \ \sigma\\
\end{array}
} Cf_\sigma(f)\cdot Cf_{K(\sigma)}(g)
\]

For $f=\sum_{n\geq1}\alpha_nz^n\in\G^{\langle-1\rangle}$ we have
that:
\[f^{\langle-1\rangle}\circ\left(f\bcnv g\right)
=\alpha_1^{-1}\left(f \pbcnv g\right)\] since, with the above
notations, the coefficient of $z^m$ in the right hand side is
\[\sum_{n\geq1}\sum_{
\begin{array}{cc}
i_1,\dots,i_n\geq1\\
 i_1+\dots+i_n=m\\
\end{array}
}
\alpha_n\alpha_1^{-n}\lambda_{i_1}\cdots\lambda_{i_n}
\]
while the coefficient of $z^m$ in the left-hand side is
\[
\sum_{n\geq1} \sum_{1=b_1<\dots b_n\leq m} \sum _{
\begin{array}{cc}
\pi\in NC(m)\\
(b_1,\dots,b_n)\in\pi\\
\end{array}
}
 Cf_\pi(f)\cdot
Cf_{K(\pi)}(g)
\]
and the equality follows setting $\pi_k=\pi|\{b_k,\dots,b_{k+1}-1\}$
(notationally $b_{n+1}=m$) and remarking that $K(\pi)$ is the
juxtaposition of $K(\pi_1),\dots,K(\pi_n)$.

It follows that , if $\{\alpha_n\}_{n\geq1}, \{\beta_n\}_{n\geq1}$
are respectively the coefficients of $f$ and $g$,(\ref{doi}) is
equivalent to
\[
\left(f\pbcnv g\right)(z)\cdot \left(f\pbcnv
g\right)(z)=\alpha_1\beta_1 z\cdot \left(f\bcnv g\right)(z)
\]
The coefficient of $z^{m+1}$ on the left-hand side is
\[
\sum_{n=1}^m
\sum_{
\begin{array}{cc}
\pi\in NC(n)\\
(1)\in\pi\\
\end{array}
}
 \sum_{
\begin{array}{cc}
\rho\in NC(m+n-1)\\
(1)\in\rho\\
\end{array}
} Cf_\pi(f)\cdot Cf_{K(\pi)}(g) \cdot Cf_\rho(g)\cdot
Cf_{K(\rho)}(f)
\]

while the coefficient  of $z^{m+1}$ on the right-hand side is
\[
\sum_{\sigma\in NC(m)}\alpha_1\beta_1
\cdot Cf_{\sigma}(f)\cdot Cf_{K(\sigma)}(g).
\]

 As shown in \cite{nisp}, the conclusion follows from the bijection
 between the index sets of the above sums. More precisely, if $1\leq n \leq m$, to the
 pair consisting on
 $\pi\in NC(n)$ and $\rho\in NC(m+1-n)$ both contain the block $(1)$,
 we associate the partition from $NC(n+m-1)$ obtained by juxtaposing
 $\pi\setminus (1)$ and $K(\rho)$.
\end{proof}

\section{central limit theorem}
\begin{thm}\emph{Let $\{(\cA_k,\n_k)\}_{k\geq 1}\subset(\cA,\n)$ be type $B$ free
independent and $(x_k,\xi_k)\in(\cA_k,\n_k)$ identically distributed
such that $\varphi(x_k)=f(\xi_k)=0$ and
$\varphi(x_k^2)=f(\xi_k^2)=1$. The limit distribution moments of
\[\frac{(a_1,\xi_1)+\dots+(a_N,\xi_N)}{\sqrt{N}}\]
are $\{m_n, \mathfrak{m}_n\}_n$, where $\{m_n\}_n$ are the moments
of the semicircular distribution and}

\[\mathfrak{m}_n=\left\{\begin{array}{cc}
0&\text{if $n$ is odd}\\ {2k \choose {k+1}} &
 \text{if $n=2k$ is even}.\end{array}\right.\]
\end{thm}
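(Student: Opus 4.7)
The plan is to compute the $R$-transform of the normalized sum $S_N/\sqrt{N}$, identify its limit as $N\to\infty$, and then read off the limiting moments via the boxed convolution identity $M=R\bcnv\zeta^\prime$.

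First, by Theorem~\ref{freethm}(i) the $R$-transform is additive over free independent summands, so $R_{S_N}=N\cdot R_{(x,\xi)}$ where $S_N=(x_1,\xi_1)+\dots+(x_N,\xi_N)$. Since $\kappa_n$ is $n$-linear on $\cA\times\n$, rescaling $(a,\xi)\mapsto\lambda(a,\xi)$ multiplies the $n$-th cumulant by $\lambda^n$, yielding
\[R_{S_N/\sqrt N}(z)=\sum_{n\ge 1}N^{1-n/2}\kappa_n(x,\xi)\,z^n.\]
The $n=1$ coefficient is $\sqrt{N}(\varphi(x),f(\xi))=0$ by the centering assumption, the $n=2$ coefficient is $\kappa_2(x,\xi)$ for every $N$, and the $n\ge 3$ coefficients decay to zero. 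A short computation using $(x,\xi)^2=(x^2,x\xi+\xi x)$ and $\kappa_2=M_2-\kappa_1^2=M_2$ gives $\kappa_2(x,\xi)=(\varphi(x^2),f(x\xi+\xi x))=(1,1)\in\C$, so the limiting $R$-transform is $R_\infty(z)=(1,1)z^2$.

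Recovering moments from $M_\infty=R_\infty\bcnv\zeta^\prime$, the defining sum $M_n=\sum_{p\in NC(n)}Cf_p(R_\infty)\cdot Cf_{Kr(p)}(\zeta^\prime)$ restricts to non-crossing pair partitions of $[n]$ since $R_\infty$ has only its $n=2$ coefficient nonzero; odd moments therefore vanish. For $n=2k$ there are $C_k=\frac{1}{k+1}\binom{2k}{k}$ such $p$, each with $k$ blocks, so $Cf_p(R_\infty)=(1,1)^k=(1,k)$ in $\C$. The complement $Kr(p)$ has $k+1$ blocks, giving $Cf_{Kr(p)}(\zeta^\prime)=(1,0)^{k+1}=(1,0)$; multiplying $(1,k)(1,0)=(1,k)$ in $\C$ yields $M_{2k}=C_k(1,k)=(C_k,kC_k)$. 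The elementary identity $kC_k=\binom{2k}{k+1}$ matches the second component to the claim, while the first component is the familiar $2k$-th moment of the semicircular law.

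The main subtlety is foundational rather than combinatorial: convergence of $R$-transforms coefficient-by-coefficient must imply convergence of the individual moments. This is automatic here since each $M_n$ depends only on $R_1,\dots,R_n$ through the boxed convolution and is therefore a polynomial function of finitely many cumulants. One should also note that the hypothesis $f(\xi_k^2)=1$ is to be read as $f(x_k\xi_k+\xi_kx_k)=1$, i.e.\ the second component of $E((x_k,\xi_k)^2)$, since $\xi_k^2$ has no intrinsic meaning inside $\n$.
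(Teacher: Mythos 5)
Your proposal is correct and follows essentially the same route as the paper: additivity of the $R$-transform under type $B$ freeness gives the limiting cumulant series $(1,1)z^2$, and the moment--cumulant formula then restricts to non-crossing pair partitions, where $(1,1)^k=(1,k)$ in $\C$ yields $M_{2k}=(C_k,kC_k)=\left(C_k,\binom{2k}{k+1}\right)$. The only cosmetic difference is that you route the moment computation through $M=R\bcnv\zeta^\prime$ rather than through the defining moment--cumulant relation directly, which amounts to the same calculation since $(1,0)$ is the unit of $\C$; your remarks on the scaling of cumulants and on reading $f(\xi_k^2)$ as the second component of $E((x_k,\xi_k)^2)$ usefully fill in details the paper leaves implicit.
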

 \begin{proof}
 Note $S_N=\frac{(a_1,\xi_1)+\dots+(a_N,\xi_N)}{\sqrt{N}}$ and
 $R_N=R(S_N)$. Theorem \ref{freethm} implies
 \[\lim_{N\rightarrow\infty}R_N=(1,1)z^2\]
 The first component of the limit distribution is the
 Voiculescu's semicircular distribution. To compute the second
 component of the moments,
 we will use the equation (\ref{mcum}), which
 becomes:
\begin{eqnarray*}
E\left((a_1,\xi_1)^n\right)&=& \sum_{\gamma\in NC^{(A)}_2(n)}\ka_{2}
\left((a_1,\xi_1)\right)^{\frac{n}{2}}\\
\end{eqnarray*}
It follows that all the odd moments are zero, and, since in $C$, $
(a,b)^n=(a^n, na^{n-1}b)$, the even moments are given by:
\begin{eqnarray*}
\mathfrak{m}_{2n}&=&nC_n,\ \text{where $C_n$ stands for the $n$-th
Catalan number}\\
&=&n\frac{1}{n+1}{2n \choose{n}}\\
&=&{2n \choose {n+1}}.
\end{eqnarray*}

 \end{proof}

 \begin{remark} \emph{The second components of the above limit moments are not
 the moments of positive Borel measure on $\mathbb{R}$. Yet, they are
 connected to the moments of another remarkable distribution
 appearing in non-commutative probability - the central limit
 distribution for monotonic independence.}

  \emph{For variables that are monotonically independent
 (see \cite{muraki1}, \cite{muraki2}), the
 limit moments in the Central Limit Theorem are given by the "arsine
 law", i.e. the $n$-th moment $\mu_n$ is given by}
 \[\mu_n=\left\{\begin{array}{cc}
0&\text{if $n$ is odd}\\ {2k \choose {k}}=(k+1)C_k &
 \text{if $n=2k$ is even.}\end{array}\right.\]

 \emph{Hence $\mu_n=m_n+\mathfrak{m}_n$, which implies the
 following:}
 \begin{cor}\emph{On $\cA\oplus\n$ consider the algebra structure
given by:}
\[ (a+\xi)(b+\eta)=ab+\xi b+a\eta\]
\emph{and} $\Psi:\cA\oplus\n\ni a+\xi \mapsto
\varphi(a)+f(\xi)\in\mathbb{C}$.

\emph{Let $(a_j,\xi_j)_j=1^\infty$ be a family from $\cA\oplus\n$
such that $\varphi(a_j)=f(\xi_j)=0$ and $(a_j,\xi_j)$ are type B
free in $\sys$.}

\emph{Then the limit in distribution of}

\[\frac{a_1+\xi_1+\dots a_N+\xi_N}{\sqrt{N}}\]
\emph{is the ''arcsine law''.}
 \end{cor}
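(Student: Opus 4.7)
The plan is to reduce the $\Psi$-moments of the sum in $\cA\oplus\n$ directly to the type $B$ moments computed in the preceding Central Limit Theorem, and then invoke the identity $\mu_n=m_n+\mathfrak{m}_n$ recorded in the Remark.

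First I would unwind the multiplication on $\cA\oplus\n$. Since $(a+\xi)(b+\eta)=ab+\xi b+a\eta$, any product of two elements of $\n$ vanishes, and a straightforward induction gives
\[
(a_1+\xi_1)\cdots(a_n+\xi_n)=a_1\cdots a_n+\sum_{p=1}^{n}a_1\cdots a_{p-1}\,\xi_p\,a_{p+1}\cdots a_n,
\]
where each summand in the second term lies in $\n$ via the bimodule action of $\cA$. Applying $\Psi$ and using linearity of $\varphi$ and $f$,
\[
\Psi\bigl((a_1+\xi_1)\cdots(a_n+\xi_n)\bigr)=\varphi(a_1\cdots a_n)+\sum_{p=1}^{n}f\bigl(a_1\cdots a_{p-1}\xi_p a_{p+1}\cdots a_n\bigr).
\]

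Second, I would expand the matrix-like product on $\cA\times\n$: the product $(a_1,\xi_1)\cdots(a_n,\xi_n)$ equals $\bigl(a_1\cdots a_n,\ \sum_{p=1}^n a_1\cdots a_{p-1}\xi_p a_{p+1}\cdots a_n\bigr)$. Hence the two components of $E\bigl((a_1,\xi_1)\cdots(a_n,\xi_n)\bigr)$ are exactly the two terms that appear on the right hand side above, and
\[
\Psi\bigl((a_1+\xi_1)\cdots(a_n+\xi_n)\bigr)=\pi_1 E\bigl((a_1,\xi_1)\cdots(a_n,\xi_n)\bigr)+\pi_2 E\bigl((a_1,\xi_1)\cdots(a_n,\xi_n)\bigr),
\]
where $\pi_1,\pi_2$ are the coordinate projections on $C$.

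Third, specializing to $(a_j,\xi_j)$ (free in the type $B$ sense and identically distributed with $\varphi(a_j)=f(\xi_j)=0$ and $\varphi(a_j^2)=f(\xi_j^2)=1$), linearity of $\Psi$ and of the two projections shows that the $n$-th $\Psi$-moment of $N^{-1/2}\sum_{j=1}^N(a_j+\xi_j)$ equals the sum of the two components of the $n$-th type $B$ moment of $N^{-1/2}\sum_{j=1}^N(a_j,\xi_j)$. By the preceding Central Limit Theorem these two components converge respectively to $m_n$ and $\mathfrak{m}_n$, so the $\Psi$-moments converge to $m_n+\mathfrak{m}_n=\mu_n$, the $n$-th moment of the arcsine law.

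The only potential difficulty is bookkeeping: checking that the bimodule action on $\n$ and the product on $\cA\times\n$ expand in precisely the same monomials, so that the component-by-component matching of $\Psi$-moments and $E$-moments is exact and not merely up to lower-order terms. Once that identification is made, the result is just the arithmetic identity of Muraki's arcsine moments from the Remark.
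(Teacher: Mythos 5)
Your proposal is correct and takes essentially the same route as the paper: the corollary is deduced there directly from the type $B$ central limit theorem together with the arithmetic identity $\mu_n=m_n+\mathfrak{m}_n$. The explicit check that $\Psi$ applied to a product in $\cA\oplus\n$ equals the sum of the two components of $E$ applied to the corresponding product in $\cA\times\n$ is exactly the bookkeeping the paper leaves implicit, and you carry it out correctly.
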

 \end{remark}

\section{Poisson limit theorem}

 We will consider an analogue of the classical Bernoulli distribution in
a type $B$ probability space.

Let $A=(\alpha_1,\alpha_2)\in\mathbb{R}^2\subset \C$.  We call an
element $(a,\xi)\in\cA\times\n$ type $B$ Bernoulli with rate
$\Lambda$ and jump size $A$ if

\[E\left((a,\xi)^n\right)=\Lambda A^n\]
for some $\Lambda=(\lambda_1,\lambda_2)\in\C$

\begin{thm}
\emph{Let $\Lambda\in\C$ and $A\in\mathbb{R}^2$. Then the limit
distribution for $N\rightarrow\infty$ of the sum of $N$ free
independent type B Bernoulli variables with rate $\frac{\Lambda}{N}$
and jump size $A$}
\emph{has cumulants which are given by} $\kappa_n=\Lambda A^n$.
\end{thm}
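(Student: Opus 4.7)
The plan is to reduce the problem to a single Bernoulli summand via additivity of cumulants under freeness, and then expand those cumulants asymptotically using the moment-cumulant formula. Write $S_N := (a_1,\xi_1)+\cdots+(a_N,\xi_N)$ and let $\kappa_n^{(N)}$ denote the $n$-th type B cumulant of a single Bernoulli variable with rate $\Lambda/N$ and jump size $A$, so that by hypothesis $E((a_k,\xi_k)^m) = (\Lambda/N)\,A^m$ for every $m\geq 1$. Theorem \ref{freethm}(i) guarantees that $R$-transforms (hence cumulants) add across free summands, so $\kappa_n(S_N) = N\,\kappa_n^{(N)}$, and the whole task reduces to showing that $N\kappa_n^{(N)}\to \Lambda A^n$ as $N\to\infty$.

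The key step is to establish by induction on $n$ the asymptotic expansion $\kappa_n^{(N)} = (\Lambda/N)\,A^n + O(1/N^2)$, with $O(\cdot)$ interpreted componentwise in $\C\cong\mathbb{C}^2$. The base case $n=1$ is immediate from $\kappa_1^{(N)} = E((a,\xi)) = (\Lambda/N)\,A$. For the inductive step I would solve the moment-cumulant identity (\ref{mcum}) for the top term,
\[
\kappa_n^{(N)} \;=\; \frac{\Lambda}{N}\,A^n \;-\; \sum_{\substack{\gamma\in NC^{(A)}(n)\\ \gamma\neq 1_n}} \prod_{B\in\gamma}\kappa_{|B|}^{(N)},
\]
where the products are taken in the commutative algebra $\C$. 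Every $\gamma\neq 1_n$ has at least two blocks, so by the inductive hypothesis each summand on the right is a product of at least two $O(1/N)$ factors, hence $O(1/N^2)$. Multiplying by $N$ then gives $\kappa_n(S_N) = \Lambda A^n + O(1/N)$, which tends to $\Lambda A^n$ as $N\to\infty$, and this is exactly the stated conclusion.

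I do not anticipate any genuine obstacle beyond careful bookkeeping: the $O$-estimates must be interpreted componentwise in $\C$, and the type B multiplication $(x,t)(y,s) = (xy, xs+ty)$ produces a cross-term in the second coordinate that has to be tracked at each step. However, the commutativity and finite-dimensionality of $\C$ keep everything under control, and the overall scheme is a verbatim lift of the classical proof of the free Poisson limit theorem to the commutative base $\C$.
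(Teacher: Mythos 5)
Your proof is correct and follows essentially the same route as the paper: both reduce to a single summand via additivity of the $R$-transform under freeness and then show that the $n$-th cumulant of one Bernoulli variable is $\frac{\Lambda}{N}A^n+O\left(\frac{1}{N^2}\right)$, using the fact that every partition other than $1_n$ has at least two blocks, each contributing a factor of order $\frac{1}{N}$. The only cosmetic difference is that the paper obtains the error estimate by M\"obius inversion applied to the moments, $\kappa_n(\beta_N)=\sum_{\pi\in NC(n)}M_\pi(\beta_N)\mu(\pi,1_n)$, whereas you solve the moment--cumulant recursion inductively in the cumulants; the two computations are interchangeable.
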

\begin{proof}
We will introduce first several new notations in order to simplify
the writing. $\beta_N$ will stand for a type B Bernoulli variable
with rate $\frac{\Lambda}{N}$ and $s_N$ for a sum of $N$ such free
independent variables. $\mu$ will denote the Moebius function of the
lattice $NC(n)$ and, for $\pi\in NC(n)$ and $\beta\in \cA\times\n$,
we will use the notation
\[
M_{\pi}(\beta)=\prod_{B=\text{block of} \pi}M_{card(B)}(\beta)
\]
where $M_n(\beta)=E(\beta^n)$ is the $n$-th moment of $\beta$.

With the above notations, equation (\ref{mcum}) gives
\begin{eqnarray*}
\kappa_n(\beta_N)&=& \sum_{\pi\in NC(n)}
M_\pi(\beta_N)\mu(\pi,1_n)\\
&=&\frac{\Lambda}{N}A^n+
\sum_{
\begin{array}{cc}
\pi\in NC(n)\\
1_n\neq\pi\\
\end{array}
}
M_\pi(\beta_N)\mu(\pi,1_n)\\
&=&\frac{\Lambda}{N}A^n +O\left(\frac{1}{N^2}\right)
\end{eqnarray*}
Therefore
\[\lim_{N\rightarrow\infty}\kappa_n(s_N)=
\lim_{N\rightarrow\infty}N\kappa_n(\beta_N)=\Lambda A^n.\]
\end{proof}
Like in the type $A$ case, we have the following:
\begin{conseq}\emph{The square of a type $B$ random variable $(a,\xi)$ with distribution
given by the central limit theorem such that
$E\left((a,\xi)^2\right)=\sigma\in\C$ is a type $B$ free Poisson
element of rate $\sigma$ and jump size $(1,0)$.}
\end{conseq}

\begin{remark}\emph{The first component of the moments of a type $B$ free Poisson
variable coincides to the type $A$ case, therefore are given by a
probability measure on $\mathbb{R}$. In general, the second
component of the moments of a type $B$ free Poisson random variable
are not the moments of a real measure.}
\end{remark}

The first part of the assertion is clear. For the second part, we
will consider the particular case when $\lambda_2=0$ and
$\lambda_1=\lambda$ is close to 0 and $\alpha_1=\alpha_2=\alpha$. It
follows that
\[\kappa_n=\Lambda A^n=\left((\lambda,0)
(\alpha^n, n\alpha^{n})\right).\]

Since equation (\ref{mcum}) implies
\begin{eqnarray*}
M_2&=&\kappa_2+\kappa_1^2
=(\lambda+\lambda^2)A\\
M_3&=&\kappa_3+3\kappa_1\kappa_2+\kappa_1^3\\
&=&(\lambda+3\lambda^2+\lambda^3)A^3\\
 M_4&=&\kappa_4+4\kappa_1\kappa_3+
 2\kappa_2^2+6\kappa_2\kappa_1^2+\kappa_1^4\\
 &=&(\lambda+6\lambda^2+6\lambda^3+\lambda^4)A^4
\end{eqnarray*}
we have that the second components are given by:
\begin{eqnarray*}
\mathfrak{m}_2&=&2(\lambda+\lambda^2)\alpha^2\\
\mathfrak{m}_3&=&3(\lambda+3\lambda^2+\lambda^3)\alpha^3\\
\mathfrak{m}_4&=&4(\lambda+6\lambda^2+6\lambda^3+\lambda^4)\alpha^4\\
\end{eqnarray*}

A necessary condition for $\{\mathfrak{m}_k\}_{k\geq1}$ to be the
moments of a measure on $\mathbb{R}$ (see \cite{bsimon},
\cite{nisp}) is that
\[\mathfrak{m}_2\mathfrak{m}_4\geq \mathfrak{m}_3^2\]
It amounts to
\[8(\lambda+\lambda^2)(\lambda+6\lambda^2+6\lambda^3+\lambda^4)\alpha^6\geq
9(\lambda+3\lambda^2+\lambda^3)^2\alpha^6\] that is
\begin{eqnarray*}
8(1+\lambda)(1+6\lambda+6\lambda^2+\lambda^3)
&\geq&
9(1+3\lambda+\lambda)^2\\
8+O(\lambda)&\geq&9+O(\lambda)
\end{eqnarray*}
which, for $\lambda$ small enough, does not hold true.

\end{document}